\pgfplotsset{compat=newest}
\newtheorem{theorem}{Theorem}
\newcommand{\norm}[1]{\left\Vert #1 \right\Vert}
\newcommand{\abs}[1]{\left\vert #1 \right\vert}
\newcommand{\R}{\mathbb R}
\newcommand{\ginv}{\#}
\DeclareMathOperator{\PV}{PV}
\newlength\plotheight
\newlength\plotwidth
\author{Beatrice Meini\thanks{Dipartimento di Matematica, 
Universit\`a di Pisa,
Largo B. Pontecorvo 5, 56127 Pisa, Italy 
({\tt meini@dm.unipi.it}).}\and Federico Poloni\thanks{Scuola Normale Superiore, Piazza dei 
Cavalieri 6, 56126 Pisa, Italy
({\tt f.poloni@sns.it}).}} \title{A Perron iteration for the
  solution of a quadratic vector equation arising in Markovian Binary Trees}
\begin{document}
\maketitle

{\bf Keywords:}
Perron vector, Markov chain, Markovian Binary Tree, nonlinear matrix equation, fixed-point iteration

\begin{abstract}
We propose a novel numerical method for solving a quadratic vector equation
arising in Markovian Binary Trees. The numerical method consists in a fixed
point iteration, expressed by means of the Perron vectors of a sequence of
nonnegative matrices. A theoretical convergence
analysis is performed. The proposed method outperforms the existing methods
for close-to-critical problems.
\end{abstract}

\section{Introduction}
In this paper we study the quadratic vector equation
\begin{equation}\label{eq:qve}
x=a+B(x\otimes x),
\end{equation}
where $a\in\R^n$, $B\in \R^{n\times  n^2}$ have nonnegative entries, the
symbol $\otimes$ denotes the Kronecker product, and  the unknown $x$ is
an $n$-dimensional vector.
The coefficients $a$ and $B$ are such that 
the vector $e=(1,1,\dots,1)^T$ is a solution of \eqref{eq:qve}.

Equation \eqref{eq:qve} arises in the study of
{M}{arkovian} Binary Trees (MBT), which are a particular family of branching processes used to model the growth of populations consisting of several types
of individuals, who may produce offsprings during their lifetime.
MBTs have applications in biology, epidemiology and also in telecommunication
systems. We refer to \cite{BeanKontoleonTaylor,HLR10} for definitions,
properties and applications.

One important issue related to MBTs is the computation of the extinction
probability of the population, which is the minimal
nonnegative solution $x^\ast\in\R^n$ of the quadratic vector equation
\eqref{eq:qve}. 

The MBT is called subcritical, supercritical or critical if the spectral
radius $\rho(R)$ of the matrix $R = B(e \otimes I + I \otimes e)$ is strictly
less than one, strictly greater than one, or equal to one, respectively. In
the subcritical and critical cases the minimal nonnegative solution is the
vector of all ones, while in the supercritical case $x^\ast\le e$, $x^\ast\ne
e$ (see \cite{HLR10} and \cite{an04}). Thus, only the supercritical case is of
interest for the computation of $x^\ast$.

Several numerical methods have been proposed for computing the vector
$x^\ast$. In \cite{BeanKontoleonTaylor} the authors propose two linearly
convergent algorithms, called \emph{depth} and \emph{order} algorithms. The \emph{thicknesses} algorithm, still linearly convergent, is
proposed in \cite{HLR10}. In \cite{HautphenneLatoucheRemiche} the authors
apply the Newton method, which has quadratic convergence. A modification of
Newton's method has been proposed in \cite{HVH10}.
All these methods have a probabilistic interpretation, and each of them
provides a sequence $\{x_k\}_k$ of nonnegative vectors, with
$x_0=(0,\ldots,0)^T$, which converges monotonically to the minimal nonnegative
solution $x^\ast$. A common feature of all these methods is that their
convergence speed slows down when the problem, while being supercritical,
is {\em close to critical}, i.e., the spectral radius of $R$ is close to one
and $x^\ast \approx e$. Moreover, the accuracy of the approximation deteriorates.

In this paper we write equation \eqref{eq:qve} in the form
$
x=a+b(x,x)
$
where $b(u,v):=B(u\otimes v)$ is the bilinear form defined by the matrix $B$.
If we set $y=e-x$, the latter equation  becomes
\begin{equation}\label{eqy0}
y=b(y,e)+b(e,y)-b(y,y).
\end{equation}
The sought solution $y^\ast$ of \eqref{eqy0} is $y^\ast=e-x^\ast$, where
$x^\ast$ is the minimal nonnegative solution of \eqref{eq:qve}.  In the
probability interpretation of Markovian Binary Trees, since $x^\ast$ is the
extinction probability, then $y^\ast=e-x^\ast$ is the survival probability.

Applying a functional iteration directly to \eqref{eqy0}, like Newton's
method, gives nothing new, since \eqref{eqy0} differs from \eqref{eq:qve} by a
linear change of variable. However, the new equation \eqref{eqy0} can be
rewritten as
\begin{equation}\label{eqHy0}
y=H_y y,
\end{equation}
where $H_y:=b(\cdot,e)+b(e-y,\cdot)$. The matrix $H_y$ is  nonnegative and
irreducible if $y<e$. In particular the solution $y^\ast$ is such that
$\rho(H_{y^\ast})=1$ and $y^\ast$ is the Perron vector of the matrix $H_{y^\ast}$.

This interpretation allows to design a new algorithm for computing $y^\ast$.
To this purpose, define the map $\PV(M)$ as the Perron vector of a nonnegative irreducible
matrix $M$, so that we may rewrite \eqref{eqHy0} as
\begin{equation} \label{eqPV0}
 y=\PV(H_y).
\end{equation}
The idea is to apply a fixed-point iteration to solve \eqref{eqPV0}, thus
generating a sequence $\{y_k\}_k$ of positive vectors such that 
$y_{k+1}=\PV(H_{y_k})$ and $y_k$ converges to $y^\ast$. A suitable normalization of the Perron vector,
consistent with the solution, is needed to obtain a well-posed iteration.
In this way we obtain a new iterative scheme, which is completely different
from classical functional iterations. Indeed, the proposed algorithm,
unlike known methods, fully exploits the fact that the solution  $x=e$ of the equation
\eqref{eqy0} is known. Moreover, the fixed-point
iteration at the basis of our algorithm relies on a new interpretation of the
solution $y^\ast$ in terms of the Perron vector.
These differences with respect to classical methods lead to great
improvements in the numerical solution
of MBTs which are close to critical.

We perform a convergence analysis of the fixed point iteration
$y_{k+1}=\PV(H_{y_k})$, by giving an expression to the Jacobian of the map
$y\to\PV(H_y)$. This expression allows to derive a local convergence
result. Moreover, most importantly,
we prove
that, although the convergence of the
method is linear, the speed of convergence \emph{increases} as the problem gets
close to critical. In the limit case of a critical problem, the convergence
becomes superlinear. This nice behavior is opposite to the one of Newton's
method, whose speed of convergence is sublinear in the supercritical case,
and becomes linear in the critical case.

A wide numerical experimentation confirms our theoretical analysis.  For
far-from-critical problems the standard techniques are preferable, while for
close-to-critical problems our method outperforms the existing ones.

The paper is organized as follows. In Section~\ref{assume} we state our
assumptions on the problem. In Section~\ref{optim} we rewrite the vector
equation in terms of an equation for the vector $y=e-x$ and discuss the
properties of the equation obtained in this way. The new algorithm,
based on a Perron iteration, is presented in Section~\ref{perron}. The
theoretical convergence analysis is performed in Section~\ref{sec:con}. Finally, in
Section~\ref{numer} we present the results of the numerical
experiments. Conclusions and open issues are addressed in Section~\ref{conclude}.

\section{Assumptions on the problem}\label{assume}
Let $a\in\R^n$, $B\in \R^{n\times  n^2}$ have nonnegative entries, and
consider 
the quadratic vector equation \eqref{eq:qve}
where it is assumed that the vector $e=(1,1,\dots,1)^T$ is a solution.  Let
$x^\ast\in\R^n$ be the minimal nonnegative solution of \eqref{eq:qve}, i.e., $x^\ast\le
x$ for any other nonnegative solution, where the semi-ordering is
component-wise. A unique solution $x^\ast$ exists, according to the results
of \cite[Section V.3]{an04}.

We assume that $\rho(R)>1$, where 
\[
R = B(e \otimes I + I \otimes e).
\] 
Under
this assumption $x^\ast\le e$, $x^\ast\ne e$ (see \cite{HLR10} and
\cite{an04}). It is worth pointing out that if $\rho(R)= 1$, then $x^\ast=e$,
therefore as $\rho(R)$ is greater than 1 and gets closer to 1, then $x^\ast$
approaches to the vector of all ones.

We introduce the bilinear operator 
\[
b(\cdot,\cdot):\R^n\times \R^n\to \R^n
\]
defined as
\[
b(u,v)=B(u\otimes v)
\]
and rewrite  \eqref{eq:qve} as
\begin{equation}\label{mbt}
x=a+b(x,x).
\end{equation}
We assume that for the minimal solution $x^\ast$ of \eqref{mbt} it holds
$0<x^\ast<e$, and that the Jacobian of the map $x\to x-a-b(x,x)$ at $x^\ast$,
i.e., $I-b(x^\ast,\cdot)-b(\cdot,x^\ast)$, is a nonsingular irreducible
M-matrix.  Since irreducibility is only determined by the nonnegativity
pattern of $b(\cdot,\cdot)$, the irreducibility condition is equivalent to
requiring that $b(e,\cdot)+b(\cdot,e)$ is irreducible. Notice that the latter
is just another notation to represent the matrix $R$.

Moreover, we may assume that $e^Tb(e-x^\ast,e-x^\ast)> 0$, otherwise
$b(e-x^\ast,e-x^\ast)=0$ (since $B\geq 0$), and the problem is trivial
since it becomes a linear problem.

\section{The optimistic equation}\label{optim}
A property of equation~\eqref{mbt} which has not been exploited so far in the
existing literature
is that $x=e$ is a solution.
If we set $x=e-y$, by using the bilinearity of the operator $b(\cdot,\cdot)$
and the property that $e=a+b(e,e)$, equation \eqref{mbt} can be rewritten as 
\begin{equation}\label{eqy}
y=b(y,e)+b(e,y)-b(y,y).
\end{equation}
The trivial solution is $y=0$, which corresponds to $x=e$.
We are interested in the nontrivial solution
$0<y^\ast<e$, which gives the sought solution $x^\ast=e-y^\ast$.

In the probabilistic interpretation of Markovian Binary Trees, $x^\ast$ is the
extinction probability, thus $y^\ast=e-x^\ast$ is the survival probability,
i.e., $y^\ast_i$ is the probability that a colony starting from a single
individual in state $i$ does not become extinct in a finite time.
For this reason, we refer to \eqref{eqy} as to the {\em optimistic
  equation}.

Notice that \eqref{eqy} admits the following probabilistic
interpretation. The term $b(y,e)$ represents the probability that the original
individual $\mathcal M$ (for ``mother'') spawns an offspring $\mathcal F$ (for
``first-born''), and after that the colony generated by the further offsprings
of $\mathcal M$, excluding $\mathcal F$, survives. The term $b(e,y)$
represents the probability that $\mathcal M$ spawns $\mathcal F$, and the colony generated by
$\mathcal F$ survives. The term $b(y,y)$ represents the probability that
$\mathcal M$ spawns
$\mathcal F$, and after that both their colonies survive. Thus \eqref{eqy} follows by
the well-known \emph{inclusion-exclusion} principle
\[
\mathbb P[\text{$\mathcal M$ or $\mathcal F$ survives}]=\mathbb
P[\text{$\mathcal M$ survives}]+ \mathbb P[\text{$\mathcal F$ survives}]-
\mathbb P[\text{both $\mathcal M$ and $\mathcal F$ survive}],
\]
where $\mathbb P[X]$ denotes the probability of the event $X$.

Equation \eqref{eqy} can be rewritten as
\begin{equation}\label{eq:pv}
y=H_y y
\end{equation}
where
\begin{equation}\label{eq:hy}
H_y=b(\cdot,e)+b(e,\cdot)-b(y,\cdot).
\end{equation}
Notice that $H_y$ is the sum of a fixed matrix and a
matrix that depends linearly on $y$. Therefore the quadratic operator on the
right-hand side of \eqref{eqy} is
``factored'' as the product of a matrix which depends on $y$, and $y$.

An important property is that $H_y$ is a nonnegative irreducible matrix,
whenever $y<e$. Therefore, by the Perron-Frobenius theorem \cite{varga}, if
$y<e$, $H_y$ has a positive eigenvalue $\lambda_y=\rho(H_y)$, the so-called
\emph{Perron value}, and to $\lambda_y$ corresponds a positive eigenvector
$w_y$, unique up to a multiplicative constant, the so-called \emph{Perron
  vector}, so that $H_y w_y=\lambda_y w_y$.  Therefore the sought solution
$y^\ast$ can be interpreted as the vector $0<y^\ast<e$ such that
$\rho(H_{y^\ast})=1$ and $y^\ast$ is a Perron vector of $H_{y^\ast}$.

It is worth pointing out that this interpretation of $y^\ast$ in terms of the
Perron vector allows to keep away from the trivial solution $y=0$ of
\eqref{eq:pv}, since the Perron vector has strictly positive elements.

The formulation of the quadratic vector equation in terms of the Perron vector
allows to design a new algorithm for its solution.

\section{The Perron iteration}\label{perron}
If we set up a fixed-point iteration or a Newton method for $y$ based on
\eqref{eqy}, we get the traditional fixed-point iterations and Newton methods
for MBTs \cite{HautphenneLatoucheRemiche}, since what we have done is simply a
linear change of variables. Instead, we exploit the fact that 
$y^\ast$ is a Perron vector of the nonnegative irreducible matrix
$H_{y^\ast}$ (compare \eqref{eq:pv}). 

To this purpose we introduce the operator
\[
u=\PV(X)
\]
which returns the Perron vector $u$ of the irreducible nonnegative matrix $X$.

Thus, we can devise a fixed-point
iteration to compute the solution $y^\ast$ by defining the sequence of vectors
\begin{equation} \label{eq:PVi}
 y_{k+1}=\PV(H_{y_k}),~~~k=0,1,2,\ldots,
\end{equation}
starting from an initial approximation $y_0$.  In order to define uniquely the
sequence $\{y_k\}$, we need to impose a normalization for the Perron vector,
which is uniquely defined up to a multiplicative constant. A possible choice
for the normalization is imposing that the residual of \eqref{eqy} is
orthogonal to a suitable vector $w\in \mathbb R^n$, i.e.,
\begin{equation}\label{nor}
w^T(y_{k+1}-b(y_{k+1},e)-b(e,y_{k+1})+b(y_{k+1},y_{k+1}))=0.
\end{equation}
Clearly, this normalization is consistent with the solution of \eqref{eqy}. We choose $w$ as the left Perron vector of the matrix $b(\cdot,e)+b(e,\cdot)$; the rationale for this choice is discussed in Section~\ref{sec:con}.

Given a Perron vector $u$ of $H_{y_k}$, the
equation to compute the normalization factor $\alpha$ such that $y_{k+1}=\alpha
u$ satisfies \eqref{nor} reduces to
\[
 \alpha w^T u = \alpha w^T b(u,e) + \alpha w^T b(e,u) - \alpha^2 w^T b(u,u),
\]
whose only non-zero solution is
\[
 \alpha=-\frac{w^T(u-b(u,e)-b(e,u))}{w^Tb(u,u)}.
\]
Notice that the solution $\alpha=0$ corresponds to the trivial solution $y=0$ ($x=e$), which we want to avoid.

The $\PV(\cdot)$ operator is defined on the set of irreducible nonnegative
matrices. If $y<e$, then the
matrix $H_y$ is nonnegative irreducible, therefore the sequence $y_k$ generated
by \eqref{eq:PVi} is well defined if $y_k<e$ for any $k$.
  
In Section \ref{sec:con} we show that the iteration \eqref{eq:PVi} is
locally convergent. Therefore, if $y_0$ is quite close to $y^\ast$, one can
expect that $y_k<e$ for any $k$.  In the case where $H_{y_k}$ is not a
nonnegative irreducible matrix, we can define $y_{k+1}$ as an eigenvector
corresponding to the eigenvalue of $H_{y_k}$ having maximal real part. We
call {\em maximal eigenvector} this eigenvector. Clearly if $H_{y_k}$ is a nonnegative
irreducible matrix, the maximal eigenvector is the Perron vector.
We see in Section~\ref{numer} that this concern is not necessary in practice.

As a starting approximation $y_0$ we may choose the null vector. For close to
critical problems, where $y^\ast$ is close to zero, this choice should
guarantee the convergence, according to the results of Section~\ref{sec:con}.

The resulting iterative process is summarized in Algorithm~1.

\begin{algorithm}[h!t]
\begin{algorithmic}
 \STATE Set $k\leftarrow 0$
 \STATE Set $y_0 \leftarrow 0$
 \STATE Set $w\leftarrow \text{the Perron vector of $b(e,\cdot)+b(\cdot,e)$}$
 \WHILE{$\norm{H_{y_k}y_k-y_k}_1 \geq \varepsilon$}
 \STATE Set $u \leftarrow$ the maximal eigenvector of $H_{y_k}$
 \STATE Compute the normalization factor $\alpha=-\frac{w^T(u-b(u,e)-b(e,u))}{w^Tb(u,u)}$
 \STATE Set $y_{k+1} \leftarrow \alpha u$
  \STATE Set $k\leftarrow k+1$
 \ENDWHILE
 \RETURN $x=e-y_k$
\end{algorithmic}
\caption{The Perron iteration}\label{algpv}
\end{algorithm}

\section{Convergence analysis of the Perron iteration}\label{sec:con}
In this section, we show that the Perron iteration \eqref{eq:PVi} is
locally convergent, and its convergence is linear. Moreover, the convergence
speed gets faster as the problem gets closer to critical.
 
\subsection{Derivatives of eigenvectors}
It is well known \cite{wilkinsonbook} that the eigenvalues and eigenvectors of
a matrix are analytical functions of the matrix entries in a neighborhood
of a simple eigenpair. The following formula for an analytical expression of
their first derivatives is from Meyer and Stewart
\cite[Theorem~1]{meyerstewart}.

\begin{theorem}\label{theoms}
Let $A=A(z)$, $\lambda=\lambda(z)$, $u=u(z)$ be a matrix, eigenvalue and
associated eigenvector depending on a parameter $z \in \mathbb C$. Let us
suppose that $\lambda(z_0)$ is simple and $A'(z_0)$, $\lambda'(z_0)$,
$u'(z_0)$ each exist. Let $w=w(z)$ be another vector such that $w'(z_0)$
exists and let $\sigma(u,w)$ be a function whose value is a real scalar
constant for all $z$. Let $\sigma_1^H$ and $\sigma_2^H$ be the partial
gradients of $\sigma(\cdot,\cdot)$ seen as a function respectively of its
first and second vector argument only.

If $\sigma_1^H u\neq 0$ for $z=z_0$, then the derivative $u'$ of $u$ at $z=z_0$ is given by
\[
 u'=\frac{\sigma_1^H(A-\lambda I)^\ginv A'u-\sigma_2^H w'}{\sigma_1^H u}u-(A-\lambda I)^\ginv A'u.
\]
\end{theorem}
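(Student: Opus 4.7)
The plan is to differentiate both the eigenvalue equation and the normalization condition at $z=z_0$, then combine them using the group inverse $(A-\lambda I)^\ginv$, whose existence is guaranteed because $\lambda$ is a simple eigenvalue. All quantities below are understood to be evaluated at $z_0$, and $v$ denotes a left eigenvector associated with $\lambda$ so that $v^H u \neq 0$.

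First I would differentiate the identity $A u = \lambda u$ with respect to $z$, obtaining $A' u + A u' = \lambda' u + \lambda u'$, i.e.\ $(A-\lambda I)u' = \lambda' u - A' u$. Since $A-\lambda I$ is singular we cannot invert it directly, but because $\lambda$ is simple the group inverse $(A-\lambda I)^\ginv$ is well defined and satisfies the two crucial identities $(A-\lambda I)^\ginv u = 0$ and $(A-\lambda I)^\ginv (A-\lambda I) = I - \frac{u v^H}{v^H u}$ (i.e.\ it is the complementary spectral projector onto the range of $A-\lambda I$). Applying $(A-\lambda I)^\ginv$ to the differentiated eigenvalue equation therefore kills the $\lambda' u$ term on the right and, using the second identity on the left, yields
\[
u' - \frac{v^H u'}{v^H u}\, u = -(A-\lambda I)^\ginv A' u.
\]
Hence $u' = c\, u - (A-\lambda I)^\ginv A' u$ for the scalar $c = v^H u'/v^H u$, which is determined only up to the one-dimensional indeterminacy that any eigenvector inherits.

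Next I would fix $c$ using the normalization. Differentiating $\sigma(u(z), w(z)) \equiv \mathrm{const}$ at $z_0$ by the chain rule gives $\sigma_1^H u' + \sigma_2^H w' = 0$. Substituting the expression just obtained for $u'$ into this relation,
\[
\sigma_1^H\!\bigl(c\, u - (A-\lambda I)^\ginv A' u\bigr) + \sigma_2^H w' = 0,
\]
and solving for $c$ (which is legal precisely because of the hypothesis $\sigma_1^H u \neq 0$) gives
\[
c = \frac{\sigma_1^H(A-\lambda I)^\ginv A' u - \sigma_2^H w'}{\sigma_1^H u}.
\]
Plugging this back into $u' = c u - (A-\lambda I)^\ginv A' u$ yields exactly the formula in the statement.

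The main obstacle I anticipate is not the algebra, which is essentially a one-line calculation once the group inverse is in hand, but justifying the two structural identities for $(A-\lambda I)^\ginv$ that make the argument work: annihilation of the right eigenvector $u$, and the representation of $(A-\lambda I)^\ginv(A-\lambda I)$ as $I$ minus the rank-one spectral projector $u v^H/v^H u$. Both follow from the simplicity of $\lambda$ together with the defining properties of the group inverse, and they are what allow us to both eliminate the $\lambda' u$ term and identify the residual indeterminacy of $u'$ as a scalar multiple of $u$ that the normalization is then used to pin down.
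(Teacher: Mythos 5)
Your proof is correct and is essentially the argument the paper uses for its modified version, Theorem~\ref{theomsmod}. The paper does not reproduce a proof of Theorem~\ref{theoms} itself (it cites Meyer and Stewart), but the proof it gives of Theorem~\ref{theomsmod} has exactly the same skeleton as yours: differentiate $Au=\lambda u$, solve the resulting consistent singular system to determine $u'$ up to a scalar multiple of $u$, then use the differentiated normalization $\sigma_1^H u' + \sigma_2^H w' = 0$ to pin down that scalar. The only substantive difference is the choice of generalized inverse. You use the group inverse, whose two structural properties you correctly invoke, namely $(A-\lambda I)^\ginv u = 0$ and $(A-\lambda I)^\ginv(A-\lambda I) = I - \frac{uv^H}{v^H u}$; the first kills the $\lambda' u$ term automatically, which is why the group-inverse formula contains only $A'$ rather than $A'-\lambda' I$. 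The paper's Theorem~\ref{theomsmod} instead uses the Moore--Penrose pseudoinverse, which does not annihilate $u$; it must therefore retain the factor $(A'-\lambda' I)$ so that the right-hand side lies in $\operatorname{range}(A-\lambda I)$ before applying $(A-\lambda I)^\dagger$. Both are equally rigorous; your route produces the slightly cleaner group-inverse formula exactly as stated, while the paper's route trades a little extra algebra (and the explicit formula $\lambda'=v^H A' u / v^H u$) for a more standard computational tool.
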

Here $X^\ginv$ denotes the so-called \emph{group inverse} of a singular matrix
$X$, i.e., the inverse of $X$ in the maximal multiplicative subgroup
containing $X$. We refer the reader to the abovementioned paper for more
details on group inverses.

In fact, very little is needed on group inverses, and the formula can be modified slightly in order to replace it with the Moore-Penrose pseudoinverse $X^\dagger$, which is a more canonical tool in matrix computations.
\begin{theorem}\label{theomsmod}
 With the same hypotheses as Theorem~\ref{theoms}, let $v(z)$ be the left
 eigenvector of $A(z)$ corresponding to the eigenvalue $\lambda(z)$. 
If $\sigma_1^H u\neq 0$ for $z=z_0$, then the derivative $u'$ of $u$ at $z=z_0$ is given by
\begin{align}\label{msmod}
  u'&=\frac{\sigma_1^H(A-\lambda I)^\dagger (A'-\lambda' I)u-\sigma_2^H w'}{\sigma_1^H u}u-(A-\lambda I)^\dagger (A'-\lambda' I)u,
\end{align}
with $\lambda' = \frac{v^H A' u}{v^H u}$.
\end{theorem}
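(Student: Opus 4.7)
The plan is to redo the Meyer–Stewart derivation from scratch, observing that the correction from $A'$ to $A'-\lambda' I$ is precisely what is needed to make the right-hand side of the basic resolvent equation lie in $\mathrm{range}(A-\lambda I)$, the subspace where the Moore–Penrose pseudoinverse behaves as a genuine right inverse of $A-\lambda I$.

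First I would differentiate the eigenvalue identity $A(z)u(z)=\lambda(z)u(z)$ at $z_0$ and rearrange to obtain the resolvent equation
\[
(A-\lambda I)\,u' = -(A'-\lambda' I)\,u.
\]
Multiplying on the left by $v^H$ and using $v^H(A-\lambda I)=0$ together with $v^H u\neq 0$ (a standard consequence of the simplicity of $\lambda$) yields $\lambda'=(v^H A' u)/(v^H u)$ at once.

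Next I would check that the right-hand side of the resolvent equation automatically lies in $\mathrm{range}(A-\lambda I)$. Indeed $\mathrm{range}(A-\lambda I)=\ker((A-\lambda I)^H)^\perp=\mathrm{span}(v)^\perp$, and by the very definition of $\lambda'$ one has $v^H(A'-\lambda'I)u=0$. Consequently $-(A-\lambda I)^\dagger(A'-\lambda' I)u$ is a particular solution of the resolvent equation, and since $\ker(A-\lambda I)=\mathrm{span}(u)$ the general solution reads
\[
u' = -(A-\lambda I)^\dagger(A'-\lambda' I)\,u + c\,u
\]
for some scalar $c$. The constant $c$ is then pinned down by differentiating the constraint $\sigma(u(z),w(z))\equiv\text{const}$ to get $\sigma_1^H u'+\sigma_2^H w'=0$; substituting the above expression for $u'$ and dividing by $\sigma_1^H u\neq 0$ reproduces exactly formula \eqref{msmod}.

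The only substantive obstacle is conceptual rather than computational. In the original Meyer–Stewart formula the group inverse $(A-\lambda I)^\ginv$ annihilates $u$, so one may freely replace $A'u$ by $(A'-\lambda' I)u$ inside $(A-\lambda I)^\ginv$ without changing anything; however the Moore–Penrose pseudoinverse does not share this property, since in general $(A-\lambda I)^\dagger u\neq 0$. The shift by $\lambda' I$ is therefore not cosmetic: it is exactly the correction that projects the right-hand side onto $\mathrm{range}(A-\lambda I)$, where $(A-\lambda I)^\dagger$ acts as a right inverse. Once this observation is in place, the remaining steps are a routine exercise in the linear algebra of simple eigenpairs.
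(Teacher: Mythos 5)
Your proof follows the same route as the paper's: differentiate $Au=\lambda u$ to obtain $(A-\lambda I)u'=-(A'-\lambda'I)u$, extract $\lambda'$ by left-multiplying with $v^H$, write $u'$ as the pseudoinverse particular solution plus an unknown multiple of $u$, and fix that multiple by differentiating the normalization constraint. The one step you treat more carefully than the paper --- explicitly verifying that $-(A'-\lambda'I)u$ lies in $\operatorname{range}(A-\lambda I)=\operatorname{span}(v)^\perp$, precisely because $v^H(A'-\lambda'I)u=0$ by the definition of $\lambda'$, so that $(A-\lambda I)^\dagger$ really returns a particular solution --- is a welcome clarification, and your closing remark on why the shift by $\lambda'I$ is immaterial for the group inverse (which annihilates $u$) yet essential for the Moore--Penrose pseudoinverse correctly pinpoints the substance of the modification to Meyer--Stewart.
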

\begin{proof}
  The proof is a minor modification of the original proof \cite{meyerstewart}
  of Theorem~\ref{theoms}. By differentiating the identity $Au=\lambda u$ we
  get $A' u+Au'=\lambda' u+ \lambda u'$, i.e.,
\begin{equation} \label{ms0}
 (A-\lambda I)u'=-(A'-\lambda' I)u. 
\end{equation}
By left-multiplying everything by $v^H$, and noting that $v^H A=\lambda v^H$, we get the required expression for the eigenvalue derivative $\lambda'$. Moreover, since $u$ is a simple eigenvector at $z=z_0$, the kernel of $(A-\lambda I)$ is $\operatorname{span}(u)$. Thus from \eqref{ms0} we can determine $u'$ up to a scalar multiple of $u$:
\begin{equation}\label{ms1}
 u'=-(A-\lambda I)^\dagger (A'-\lambda' I)u + \delta u.
\end{equation}
We shall now use the normalization condition $\sigma(u,w)=k$ to determine the value of $\delta$. By differentiating it, we get
\begin{equation}\label{ms2}
 \sigma^H_1(u,w) u' + \sigma^H_2(u,w) w'=0. 
\end{equation}
Plugging \eqref{ms1} into \eqref{ms2} yields a linear equation for $\delta$.
\end{proof}

\subsection{Jacobian of the Perron iteration}
The Perron iteration is a fixed-point iteration for the function
$F(y):=PV(H_y)$, where the function $u=PV(X)$ returns the Perron vector $u$ of
the nonnegative irreducible matrix $X$, normalized such that $w^T(u-H_u u)=0$,
where $w$ is a fixed positive vector. We can use Theorem~\ref{theomsmod} to
compute the Jacobian of this map $F$.

\begin{theorem}
Let $y$ be such that $H_y$ is nonnegative and irreducible.
Let $u=F(y)$, and let
$v$ such that $v^TH_y=\lambda v^T$, where $\lambda=\rho(H_y)$.
Then the Jacobian of the map  $F$ at $y$ is
\begin{equation}\label{jac}
 JF_y=\left(I-\frac{u\sigma_1^T}{\sigma_1^T u}  \right) (H_y-\lambda I)^\dagger  \left(I-\frac{uv^T}{v^Tu}\right)b(\cdot,u)
\end{equation}
where 
\[
\sigma_1^T=w^T(I-b(e-u,\cdot)-b(\cdot,e-u)).
\]
\end{theorem}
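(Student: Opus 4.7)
The plan is to apply Theorem~\ref{theomsmod} to the parameterized family $A(y):=H_y$, taking one directional derivative at a time, and then identify the result as a linear operator in the direction $h\in\R^n$ to read off the Jacobian.

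I would first collect the pieces entering formula \eqref{msmod}. Since $H_y=b(\cdot,e)+b(e,\cdot)-b(y,\cdot)$, the directional derivative is $A'\cdot h=-b(h,\cdot)$, so that $A'u\cdot h=-b(h,u)$ and, as an operator in $h$, $A'u=-b(\cdot,u)$. The formula $\lambda'=v^TA'u/(v^Tu)$ then gives $\lambda'\cdot h=-v^Tb(h,u)/(v^Tu)$. The normalization vector $w$ is fixed, hence $w'=0$ and the $\sigma_2^T w'$ term disappears from \eqref{msmod}. Finally, the normalization $\sigma(u)=w^T(u-H_u u)$ expands by bilinearity of $b$ to $w^T(u-b(u,e)-b(e,u)+b(u,u))$, whose derivative in $u$, after grouping the quadratic contribution $b(u,u)$ together with the linear ones, is
\[
\sigma_1^T=w^T\bigl(I-b(e-u,\cdot)-b(\cdot,e-u)\bigr),
\]
matching the statement.

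Plugging these ingredients into \eqref{msmod} with $w'=0$ yields
\[
u'=-\left(I-\frac{u\sigma_1^T}{\sigma_1^T u}\right)(H_y-\lambda I)^\dagger(A'-\lambda' I)u,
\]
and it remains to identify $-(A'-\lambda' I)u$ as an operator in $h$. Combining the expressions above,
\[
-(A'-\lambda' I)u\cdot h=b(h,u)-\frac{v^Tb(h,u)}{v^Tu}\,u=\left(I-\frac{uv^T}{v^Tu}\right)b(h,u),
\]
so this operator is $\bigl(I-uv^T/(v^Tu)\bigr)b(\cdot,u)$. Substituting reproduces exactly \eqref{jac}.

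The only genuinely non-mechanical step is the derivation of $\sigma_1^T$ from the normalization: one must use the bilinearity of $b$ carefully and notice the cancellations that combine the linear and quadratic contributions into the symmetric expression $b(e-u,\cdot)+b(\cdot,e-u)$. Beyond that, the proof is a direct assembly of the quantities produced by Theorem~\ref{theomsmod}, with the two rank-one projectors in \eqref{jac} arising respectively from the normalization condition $\sigma(u,w)=k$ and from the spectral-projector contribution coming from the eigenvalue derivative $\lambda'$.
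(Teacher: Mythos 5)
Your proof is correct and follows essentially the same route as the paper: apply Theorem~\ref{theomsmod} to the family $A(y)=H_y$, compute $A'$, $\lambda'$, and $\sigma_1^T$ exactly as you do, and regroup $-(A'-\lambda' I)u$ into the projector $\bigl(I-uv^T/(v^Tu)\bigr)b(\cdot,u)$. The only superficial difference is that the paper parameterizes $y(z)=y+az$ and carries the direction $a$ explicitly through the computation, whereas you read off the operator in $h$ directly; the algebra is identical.
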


\begin{proof}
We shall compute first the directional derivative of $F$ at  $y$ along the
direction $a$. To this purpose, let us set $y(z):=y+az$, for any $z \in \mathbb C$, and
$A(z)=H_y$. We have 
\[
A'(z)=\frac{d}{dz}H_y=-b(a,\cdot).
\]
Moreover, set 
\[
\sigma(u,w)=w^T(u-b(e,u)-b(u,e)+b(u,u)),
\] 
where $w(z)=w$ for each $z$ (so that $w'=0$).
The partial gradient of $\sigma(\cdot,\cdot)$ with respect to the first argument is 
$
\sigma_1^T=w^T(I-b(e-u,\cdot)-b(\cdot,e-u)).
$
Plugging everything into \eqref{msmod}, we get
\[
\begin{aligned}
 u'=&\frac{\sigma_1^T(A-\lambda I)^\dagger (A'-\lambda' I)u}{\sigma_1^T u}u-(A-\lambda I)^\dagger (A'-\lambda' I)u\\
 =& -\left(I-\frac{u\sigma_1^T}{\sigma_1^T u}  \right) (A-\lambda I)^\dagger \left(A'-\frac{v^T A' u}{v^Tu} I\right)u\\
 =& -\left(I-\frac{u\sigma_1^T}{\sigma_1^T u}  \right) (A-\lambda I)^\dagger  \left(I-\frac{uv^T}{v^Tu}\right)A'u\\
=& -\left(I-\frac{u\sigma_1^T}{\sigma_1^T u}  \right) (A-\lambda I)^\dagger  \left(I-\frac{uv^T}{v^Tu}\right)(-b(a,u))\\
=& \left(I-\frac{u\sigma_1^T}{\sigma_1^T u}  \right) (A-\lambda I)^\dagger  \left(I-\frac{uv^T}{v^Tu}\right)b(\cdot,u)a.\\
\end{aligned}
\]
From this expression for the directional derivative, it is immediate to
recognize that the Jacobian is \eqref{jac}.
\end{proof}

\subsection{Local convergence of the iteration}
The fixed-point iteration $y_{k+1}=F(y_k)$ is locally convergent in a
neighborhood of $y^\ast$ if and only if the spectral
radius of $J F_{y^\ast}$ is strictly smaller than 1. First notice that it makes
sense to compute the Jacobian using \eqref{jac} in a neighborhood of the
solution $y^\ast$. In fact,
$\sigma_1^Ty^\ast=w^T(y^\ast-b(e-y^\ast,y^\ast)-b(y^\ast,e-y^\ast))=w^Tb(y^\ast,y^\ast)$
and the latter quantity is positive as $w>0$ and $b(y^\ast,y^\ast)\gneq 0$, as stated in Section~\ref{assume}. Moreover, since $\lambda=1$ is a simple eigenvalue, the left and right
eigenvectors $v=v^\ast$ and $u=y^\ast$ cannot be orthogonal.

By evaluating \eqref{jac} at $y=y^\ast$, we get
\begin{equation}\label{jacsol}
  J F_{y^\ast}=\left(I-\frac{y^\ast\sigma_1^{\ast T}}{w^Tb( y^\ast, y^\ast)}  \right) A^\dagger  \left(I-\frac{y^\ast v^{\ast T}}{v^{\ast T} y^\ast}\right)b(\cdot,y^\ast),
\end{equation}
where we have set $\sigma_1^{\ast T}=w^T(I-b(e-y^\ast,\cdot)-b(\cdot,e-y^\ast))$ and $A=(b(e-y^\ast,\cdot)+b(\cdot,e)-I)$.

Let us try to understand what happens to the spectral radius $\rho(JF_{y^\ast})$ when the problem is close to critical. 

\begin{theorem}
Let $b_t(\cdot,\cdot)$, $t\in [0,1]$ be an analytical one-parameter family of
Markovian binary trees, which is supercritical for $t\in [0,1)$ and critical
  for $t=1$, and let us denote with an additional subscript $t$ the quantities
  defined above for this family of problems. Let us suppose that
  $R_t:=b_t(e,\cdot)+b_t(\cdot,e)$ is irreducible for every $t\in [0,1]$, and
  let $\rho(JF_{y_t^\ast,t})$ be
  the spectral radius  of the Jacobian of the Perron
  iteration as defined in \eqref{jacsol}. Then 
\[
\lim_{t\to1}\rho(JF_{y_t^\ast,t})=\abs{1-\frac{ \widehat v^T_1 b_1(\widehat{y}_1,\widehat{y}_1)  }{w^Tb_1(\widehat{y}_1, \widehat{y}_1)   }\frac{w^T\widehat{y}_1}{\widehat v_1^T \widehat{y}_1}  }
\]
where $\widehat y_1$ and $\widehat v_1^T$ are left and right Perron vectors of $R_1$. As a special case, if the vector $w$ is a scalar multiple of $\widehat v_1$, the limit is 0.
\end{theorem}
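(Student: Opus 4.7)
The strategy is to show that $JF_{y_t^\ast, t}$ has a well-defined pointwise limit as $t \to 1$, identify that limit as a rank-one matrix, and then read off its spectral radius. Two asymptotic inputs drive the computation. First, since $y_t^\ast$ is the right Perron vector (with Perron value $1$) of $H_{y_t^\ast, t}$, and $H_{y_t^\ast, t} \to H_{0,1} = R_1$ as $t \to 1$, continuity of the Perron pair at a simple eigenvalue gives $y_t^\ast / \norm{y_t^\ast} \to \widehat y_1$ and $v_t^\ast / \norm{v_t^\ast} \to \widehat v_1$, while $\epsilon := \norm{y_t^\ast} \to 0$. Second, expanding $\sigma_1^T = w^T(I - b_t(e - y_t^\ast, \cdot) - b_t(\cdot, e - y_t^\ast))$ and setting $A := H_{y_t^\ast, t} - I$ yields the key algebraic identity
\[
\sigma_1^T = -w^T A + w^T b_t(\cdot, y_t^\ast).
\]

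Since $\operatorname{range}(I - P_t) = \ker(P_t) = (v_t^\ast)^\perp = \operatorname{range}(A)$, the Moore--Penrose identity $A A^\dagger (I - P_t) = I - P_t$ holds. Substituting the factored $\sigma_1^T$ into formula \eqref{jacsol} therefore decomposes the Jacobian into three additive pieces,
\begin{multline*}
JF_{y_t^\ast, t} = A^\dagger (I - P_t)\, b_t(\cdot, y_t^\ast) + \frac{y_t^\ast\, w^T (I - P_t)\, b_t(\cdot, y_t^\ast)}{w^T b_t(y_t^\ast, y_t^\ast)} \\
- \frac{y_t^\ast\, w^T b_t(\cdot, y_t^\ast)\, A^\dagger (I - P_t)\, b_t(\cdot, y_t^\ast)}{w^T b_t(y_t^\ast, y_t^\ast)}.
\end{multline*}
Because $b_t(\cdot, y_t^\ast) = O(\epsilon)$, $y_t^\ast = O(\epsilon)$, and $w^T b_t(y_t^\ast, y_t^\ast) = O(\epsilon^2)$, while $A^\dagger(I - P_t)$ stays bounded (its action on $\operatorname{range}(A)$ inverts eigenvalues that converge to the nonzero eigenvalues of $R_1 - I$, which are bounded away from $0$), the first and third pieces are $O(\epsilon) \to 0$ and only the middle piece is $O(1)$. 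Replacing $y_t^\ast$ by $\epsilon \widehat y_1$ and $w^T b_t(y_t^\ast, y_t^\ast)$ by $\epsilon^2 w^T b_1(\widehat y_1, \widehat y_1)$ to leading order yields
\[
J_1 := \lim_{t \to 1} JF_{y_t^\ast, t} = \frac{\widehat y_1\, w^T (I - P_1)\, b_1(\cdot, \widehat y_1)}{w^T b_1(\widehat y_1, \widehat y_1)}, \qquad P_1 = \frac{\widehat y_1 \widehat v_1^T}{\widehat v_1^T \widehat y_1}.
\]

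The matrix $J_1$ has rank one with image $\operatorname{span}(\widehat y_1)$, so its only nonzero eigenvalue is obtained by applying $J_1$ to $\widehat y_1$. Since $w^T P_1 b_1(\widehat y_1, \widehat y_1) = (w^T \widehat y_1)(\widehat v_1^T b_1(\widehat y_1, \widehat y_1))/(\widehat v_1^T \widehat y_1)$, this eigenvalue equals $1 - \frac{\widehat v_1^T b_1(\widehat y_1, \widehat y_1)}{w^T b_1(\widehat y_1, \widehat y_1)} \cdot \frac{w^T \widehat y_1}{\widehat v_1^T \widehat y_1}$, and the claimed formula follows by continuity of the spectral radius on $\mathbb{C}^{n \times n}$. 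In the special case $w \propto \widehat v_1$, the ratio collapses to $1$, giving limit $0$. The main obstacle is the asymptotic bookkeeping: the rank-one correction in $M_1 = I - y_t^\ast \sigma_1^T / (w^T b_t(y_t^\ast, y_t^\ast))$ blows up as $1/\epsilon$, while $b_t(\cdot, y_t^\ast)$ only contributes a single $\epsilon$, so it is not a priori clear that one can obtain a finite limit nor that exactly one $O(1)$ term survives; the decomposition above, driven by the identity for $\sigma_1^T$, is what makes the cancellation manifest.
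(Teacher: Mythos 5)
Your proof is correct and takes a genuinely different route from the paper's. The paper never shows that the Jacobian matrix itself converges as $t\to 1$; instead it works throughout with the scalar $\rho(JF_{y_t^\ast,t})$, applying $\rho(AB)=\rho(BA)$ to cycle the factor $b_t(\cdot,y_t^\ast)$ from the right end to the left end of the product, which has the effect of balancing the powers of $\epsilon=\norm{y_t^\ast}$ in numerator and denominator; it then replaces $y_t^\ast$ by the normalized $\widehat y_t$ inside the spectral radius, passes to the limit of the resulting expression, and uses $(I-R_1)(I-R_1)^\dagger = I-\widehat y_1\widehat v_1^T/(\widehat v_1^T\widehat y_1)$ plus a second cycling to collapse everything to a scalar. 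Your argument instead factors $\sigma_1^T=-w^TA+w^Tb_t(\cdot,y_t^\ast)$ and combines it with the Moore--Penrose identity $AA^\dagger(I-P_t)=I-P_t$ (valid because $\operatorname{range}(I-P_t)=\operatorname{range}(A)$) to split $JF_{y_t^\ast,t}$ into three additive pieces, of which only the middle one is $O(1)$, and you obtain a bona fide pointwise limit $J_1$ of the Jacobian as a rank-one matrix. This is a slightly stronger conclusion than the paper states (matrix convergence rather than only convergence of the spectral radius), and makes the $\epsilon$-cancellation explicit and local to each summand rather than hidden inside a cyclic permutation. The paper's route is more compact but relies on the reader noticing which terms in the cycled expression drop out in the limit (the isolated $b_t(\cdot,y_t^\ast)$ disappears without comment between the last two displays); your additive decomposition makes that bookkeeping transparent. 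Both approaches ultimately use the same two ingredients --- the projector identity for $(I-R_1)^\dagger$ and the evaluation of the spectral radius of a rank-one matrix as the scalar $z^T\widehat y_1$ --- and both implicitly require $w^Tb_1(\widehat y_1,\widehat y_1)>0$, which holds under the assumptions of Section~\ref{assume}.
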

\begin{proof}
Let us define $\widehat y_t$
as the Perron vector of $H_{y^\ast_t,t}$ 
normalized so that $\norm{\widehat y_t}_1=1$, and similarly $\widehat v_t^T$
as the left Perron vector of the same matrix, normalized so that
$\norm{\widehat y_t}_1=1$. Since $H_{y^\ast_t,t}$ is irreducible, its left and
right Perron vectors are analytical functions of $t$, and thus $\widehat y_t$
and $\widehat v_t$ converge to $\widehat y_1$ and $\widehat v_1$ respectively.
We have $H_{y^\ast_1,1}=H_{0,1}=R_1$. Notice that $\sigma_{1,t}^{*,T}\to w^T
(I-R_1)$, and that
\[
A_t^\dagger=(b(e-y^\ast,\cdot)+b(\cdot,e)-I)^\dagger \to (R_1-I)^\dagger.
\]
Moreover, since $\widehat y_1$ and $\widehat v_1$ span the right and left kernel of $I-R_1$, we have 
\[
(I-R_1)(I-R_1)^\dagger= I-\frac{\widehat y_1 \widehat v_1^T}{\widehat v_1^T\widehat y_1}.
\]
Additionally, we shall make use of the relation $\rho(AB)=\rho(BA)$, valid for any $A$ and $B$ such that $AB$ and $BA$ are square matrices, in the first and second-to-last step of the following computation.

Putting all together, we get
\begin{align*}
\rho(J F_{y_t^\ast,t})=&\rho\left(\left(I-\frac{y_t^\ast\sigma_{1,t}^{\ast T}}{w^Tb_t( y_t^\ast, y_t^\ast)}  \right) A_t^\dagger  \left(I-\frac{y_t^\ast v_t^{\ast T}}{v_t^{\ast T} y_t^\ast}\right)b_t(\cdot,y_t^\ast)\right)\\
=&\rho\left(\left(b_t(\cdot,y_t^\ast)-\frac{b_t(y_t^\ast,y_t^\ast)\sigma_{1,t}^{\ast T}}{w^Tb_t( y_t^\ast, y_t^\ast)}  \right) A_t^\dagger  \left(I-\frac{y_t^\ast v_t^{\ast T}}{v_t^{\ast T} y_t^\ast}\right)\right)\\
=&\rho\left(\left(b_t(\cdot,y_t^\ast)-\frac{b_t(\widehat
      y_t,\widehat y_t)\sigma_{1,t}^{\ast T}}{w^Tb_t(\widehat  y_t, \widehat
      y_t)}  \right) A_t^\dagger  \left(I-\frac{\widehat y_t \widehat
      v_t^T}{\widehat v_t^T \widehat y_t}\right)\right).
\end{align*}
Therefore, we obtain
\begin{align*}
\lim_{t\to 1}\rho(J F_{y_t^\ast,t})
=&\rho\left(-\frac{b_1(\widehat y_1,\widehat y_1)w^T(I-R_1)}{w^Tb_1(\widehat  y_1, \widehat y_1)} (R_1-I)^\dagger \left(I-\frac{\widehat y_1 \widehat v_1^T}{\widehat v_1^T \widehat y_1}\right)\right)\\
=&\rho\left(\frac{b_1(\widehat y_1,\widehat y_1)w^T}{w^Tb_1(\widehat  y_1, \widehat y_1)} \left(I-\frac{\widehat y_1 \widehat v_1^T}{\widehat v_1^T \widehat y_1}\right)^2
\right)\\
=&\rho\left(\frac{b_1(\widehat y_1,\widehat y_1)w^T}{w^Tb_1(\widehat  y_1, \widehat y_1)} \left(I-\frac{\widehat y_1 \widehat v_1^T}{\widehat v_1^T \widehat y_1}\right) \right)\\
=&\rho\left(\frac{1}{w^Tb_1(\widehat  y_1, \widehat y_1)} w^T \left(I-\frac{\widehat y_1 \widehat v_1^T}{\widehat v_1^T \widehat y_1}\right) b_1(\widehat y_1,\widehat y_1) \right)\\
=&\abs{1-\frac{ \widehat v^T_1 b_1(\widehat{y}_1,\widehat{y}_1)  }{w^Tb_1(\widehat{y}_1, \widehat{y}_1)}\frac{w^T\widehat{y}_1}{\widehat v_1^T \widehat{y}_1}}.\qedhere
\end{align*}
\end{proof}

For the normalization condition, the above result suggests taking $w$ as the
left Perron vector of $b(e,\cdot)+b(\cdot,e)$. Indeed, this choice guarantees
the local convergence of the Perron iteration for close-to-critical problems.
Moreover we point out that, even though the convergence is linear, the speed
of convergence increases as the MBT gets closer to critical; in particular,
the convergence is superlinear in the critical case.

\section{Numerical experiments}\label{numer}
We compared the Perron iteration (PI)  with
the Newton method (NM) \cite{HautphenneLatoucheRemiche} and with the \emph{thicknesses} algorithm (TH) \cite{HLR10}. As stated before, TH and PI are linearly convergent algorithms, while NM is a quadratically convergent one. All the experiments were performed using Matlab 7 (R14) on an Intel Xeon 2.80Ghz bi-processor.

We applied the algorithms to the two test cases reported in
\cite{HautphenneLatoucheRemiche}. The first one (E1) is an MBT of size $n=9$
depending on a parameter $\lambda$, which is critical for $\lambda \approx
0.85$ and supercritical for larger values of $\lambda$. The second one (E2) is
a MBT of size $n=3$ depending on a parameter $\lambda$, which is critical for
$\lambda\approx 0.34$ and $\lambda \approx 0.84$, and supercritical for the
values inbetween.

The only noteworthy issue in the implementation of PI is the method used for
the computation of the maximal eigenvector. The classical methods are usually
optimized for matrices of much larger size; however, here we deal with
matrices of size $n=3$ and $n=9$, for which the complexity constants matter.
We compared several candidates (\texttt{eigs}, \texttt{eig}, the power method,
a power method accelerated by repeated squaring of the matrix), and found that
in our examples the fastest method to find the maximal eigenvector is
computing the full eigenvector basis with \texttt{[V,Lambda]=eig(P)} and then
selecting the maximal eigenvector. The picture should change for
problems of larger size: \texttt{eig} takes $O(n^3)$ operations, while for
instance $\texttt{eigs}$ should take only $O(n^2)$ in typical cases. On the
other hand, we point out that in absence of any structure (such as sparsity) in $b(\cdot,\cdot)$, forming the matrix $b(v,\cdot)$ or $b(\cdot,v)$ for a new vector $v$, an operation which is required at every step in all known iterative algorithms, requires $O(n^3)$ operations. Therefore, the CPU times are somehow indicative of the real complexity of the algorithms, but should be taken with a grain of salt.

The stopping criterion was chosen to be $\norm{x-a+b(x,x)}\leq n \varepsilon$, with $\varepsilon=10^{-13}$, for all algorithms.

The table in Figure~\ref{tabella} shows the results for several choices of $\lambda$. The algorithm TH is clearly the slowest, taking far more CPU time than the two competitors. The different behavior of PI when approaching the critical cases is apparent: while the iterations for TH and NM increase, PI seems to be unaffected by the near-singularity of the problem, and in fact the iteration count decreases slightly.
\begin{figure}
\caption{CPU time in sec. (and number of iterations in brackets) for TH, NM and PI on several choices of $\lambda$ for E1 (top) and E2 (bottom)}\label{tabella}
\begin{tabular}{cccc}
$\lambda$ & TH & NM & PI\\
\hline
0.86 & 2.3935e+00 (11879)& 5.0932e-03 (14)& 4.9267e-03 (7)\\
0.9 & 6.5353e-01 (3005) & 4.2859e-03 (12) & 5.5756e-03 (8)\\
1 & 2.8049e-01 (1149) & 3.9009e-03 (11) & 5.5090e-03 (8)\\
2 & 9.2644e-02 (191) & 2.8453e-03 (8) & 5.5125e-03 (8)\\
\end{tabular}
\begin{tabular}{cccc}
$\lambda$ & TH & NM & PI\\
0.5& 7.7003e-02 (132) & 2.3305e-03 (8) & 5.6983e-03 (11)\\
0.7& 7.6503e-02 (135) & 2.1842e-03 (8) & 5.6081e-03 (11)\\
0.8& 9.3603e-02 (313) & 2.4543e-03 (9) & 4.6166e-03 (9)\\
0.84& 7.3060e-01 (4561) & 4.0001e-03 (13) & 4.1090e-03 (8)\\
\end{tabular}
\end{figure}

To show further results on the comparison between NM and PI, we report a number of graphs comparing the iteration count and CPU times of the two algorithms. The graphs are not cut at the critical values, but they extend
to subcritical cases as well. It is an interesting point to note that when the MBT is subcritical, and thus the minimal solution $x^\ast$ (extinction probability) is $e$, the two algorithms have a different behavior: NM (and TH as well) converges to $e$, while PI skips this solution and converges to a different solution $x>e$. This is because in the derivation of the Perron iteration we chose the solution $\alpha \neq 0$ for the normalization equation, thus explicitly excluding the solution $y=0$
(i.e., $x=e$).

\begin{figure}
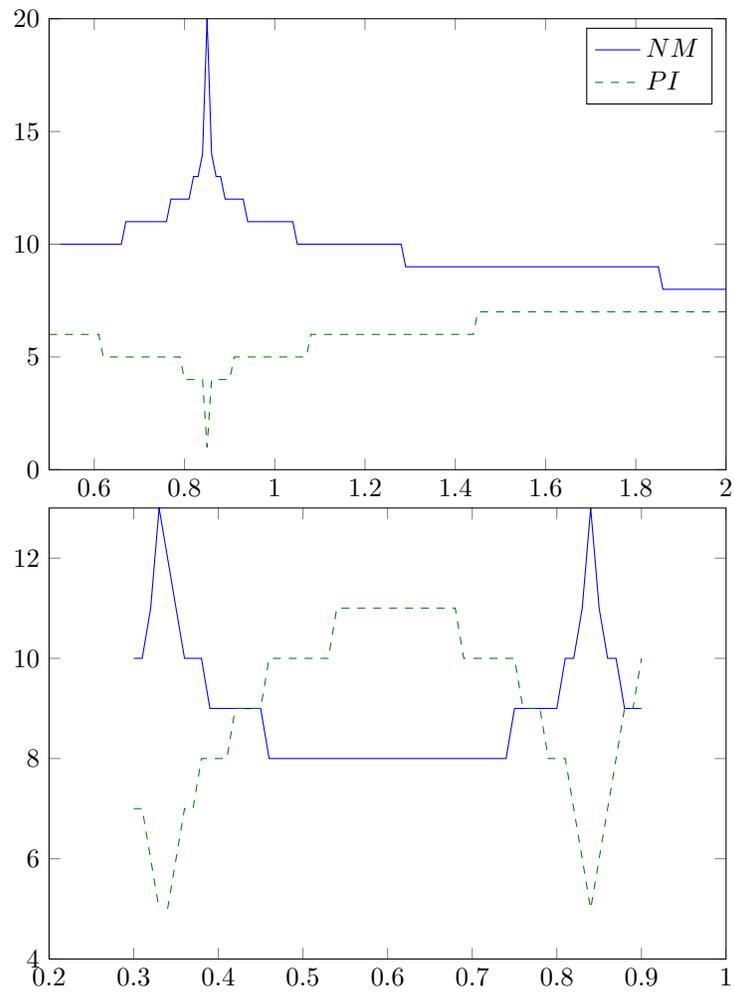

 \begin{center}
   \setlength\plotheight{6cm}
   \setlength\plotwidth{9cm}
  \beginpgfgraphicnamed{E1it}
  \input{experiments/E1it.tex}
  \endpgfgraphicnamed
\\
  \beginpgfgraphicnamed{E2it}
  \input{experiments/E2it.tex}
  \endpgfgraphicnamed
 \end{center}
 \caption{Iteration count vs. parameter $\lambda$ for E1 (top) and E2 (bottom)} \label{it}
\end{figure}

Figure~\ref{it} shows a plot of the iteration count of the two methods vs. different values of the parameter $\lambda$. While in close-to-critical cases the iteration count for NM has a spike,  the one for PI seems to decrease. However, the iteration count comparison is not fair since the steps of the two iterations require a different machine time.
\begin{figure}
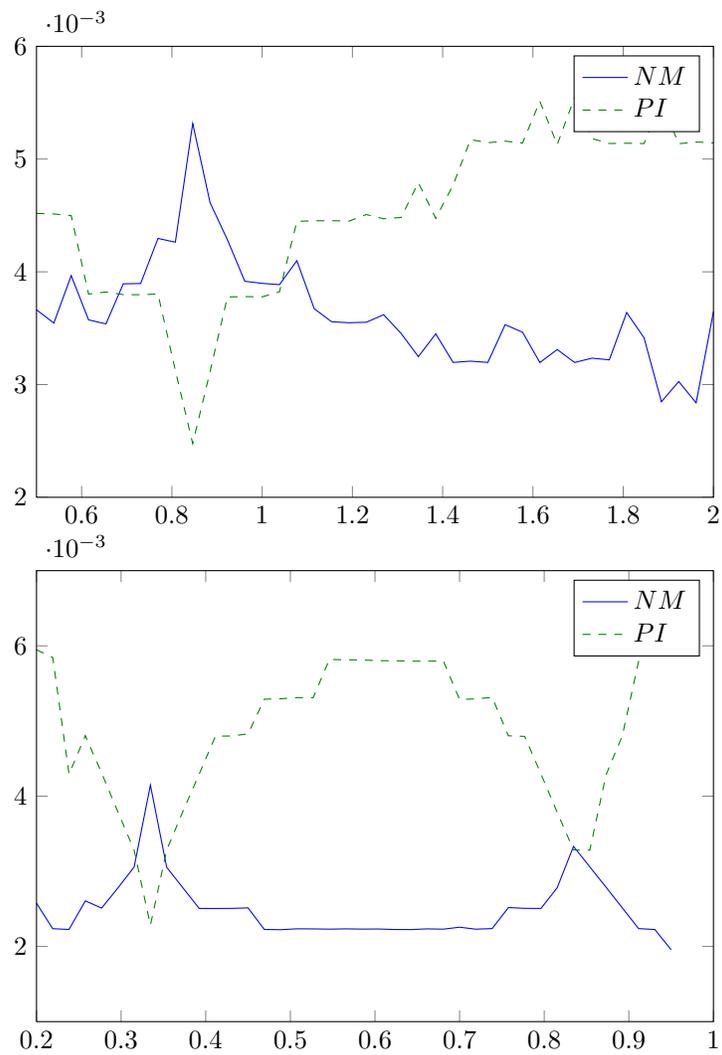

 \begin{center}
   \setlength\plotheight{6cm}
   \setlength\plotwidth{9cm}
  \beginpgfgraphicnamed{E1tm}
  \input{experiments/E1tm.tex}
  \endpgfgraphicnamed
\\
  \beginpgfgraphicnamed{E2tm}
  \input{experiments/E2tm.tex}
  \endpgfgraphicnamed
 \end{center}
 \caption{CPU time (in sec.) vs. parameter $\lambda$ for E1 (top) and E2 (bottom)} \label{tm}
\end{figure}
Figure~\ref{tm} shows a similar plot, considering the CPU time instead of the iteration count. In order to achieve better accuracy, the plotted times are averages over 100 consecutive runs.

The results now favor the Newton method in most experiments, but in close-to-critical cases the new method achieves better performance. The results are very close to each other, though, so it is to be expected that for larger input sizes or different implementations the differences in the performance of the eigensolver could lead to significant changes in the results.

\begin{figure}
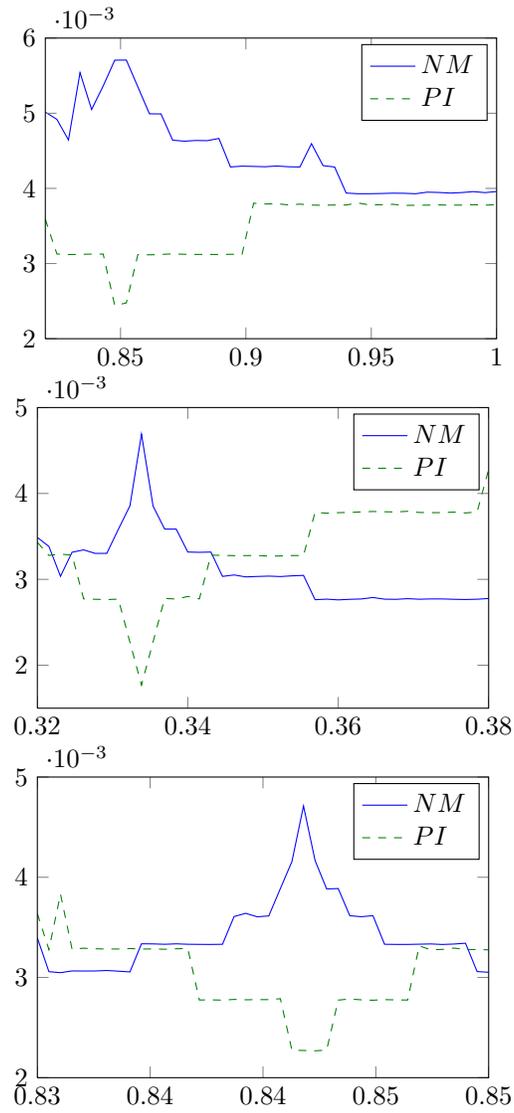

 \begin{center}
   \setlength\plotheight{4cm}
   \setlength\plotwidth{6cm}
  \beginpgfgraphicnamed{E1zm}
  \input{experiments/E1zm.tex}
  \endpgfgraphicnamed
\\
  \beginpgfgraphicnamed{E2zm1}
  \input{experiments/E2zm1.tex}
  \endpgfgraphicnamed
\\
  \beginpgfgraphicnamed{E2zm2}
  \input{experiments/E2zm2.tex}
  \endpgfgraphicnamed
 \end{center}
 \caption{Detailed views from Figure~\ref{tm}: CPU time (in sec.) vs. parameter $\lambda$ for E1 (top) and E2 (middle and bottom)} \label{big}
\end{figure}

In order to highlight the performance difference in close-to-critical cases, we report in Figure~\ref{big} a plot with the CPU times sampled at a larger number of points around the most ``interesting'' regions of the previous graphs.

The Jacobian \eqref{jacsol} had spectral radius less than 1 in all the above experiments, a condition which is needed to ensure the convergence of PI. However, this is not true for all possible MBTs. In fact, by setting the parameter $\lambda$ for E1 to much larger values, we encountered problematic cases in which PI did not converge. Specifically, starting from $\lambda \approx 78$ the Jacobian \eqref{jacsol} is larger than 1 and PI does not converge. However, such cases are of little practical interest since they are highly supercritical MBTs, distant from the critical case, and thus they are easily solved with the traditional methods (NM or the customary functional iterations \cite{BeanKontoleonTaylor}) with a small number of iterations. 

The problem E2 is well-posed only for $0 \leq \lambda \leq 1$, otherwise negative entries appear in $b$, thus the above discussion does not apply.

Along all the experiments reported above, all the matrices $H_y$ appearing in the PI steps always turned out to have positive entries, even in the subcritical problems; thus their Perron vector and values were always well-defined and real.

\section{Conclusions and open issues}\label{conclude}
We have proposed a new algorithm for solving the quadratic vector
equation \eqref{eq:qve}, based on a Perron iteration. The algorithm performs
well, both in terms of speed of convergence and accuracy, for
close-to-critical problems where the classical methods are slower.

Along the framework that we have exposed, several different choices are possible in the
practical implementation of the new algorithm. 

One of them is the choice of the bilinear form $b(\cdot,\cdot)$. Equation \eqref{eqy}
and its solution depend only on the quadratic form $b(t,t)$; however, there
are different ways to extend it to a bilinear form $b(s,t)$. This choice
ultimately reflects a modeling aspect of the problem: when an individual
spawns, it is transformed into two individuals in different states, and we may
choose arbitrarily which of them is called the \emph{mother} and which
 the \emph{child}.

As an example of how this choice affects the solution algorithms, 
changing the bilinear form may transform the \emph{depth}
algorithm into the \emph{order} one and vice versa.  The algorithms we
proposed depend on the actual choice of the bilinear extension of the
quadratic form $b(t,t)$, and the convergence speed is affected by this
decision.

A second choice is the normalization of the computed Perron vector: different
approaches may be attempted --- for instance, minimization of the $1$-norm, of the $2$-norm,
or orthogonality of the residual of
\eqref{eq:pv} with respect to a suitably chosen vector --- although it is not clear whether we can improve the results of the normalization presented here.

A third choice, crucial in the computational experiments, is the method used
to compute the Perron vector. For moderate sizes of the problem, it is cheaper
to do a full eigendecomposition of the matrix and extract the eigenvalue with
maximum modulus, but for larger problems it pays off to use different specific
methods for its computation. 

All these variants deserve to be better understood, and are now under our
investigation.

\bibliographystyle{abbrv}
\bibliography{qve}

\def\cprime{$'$}
\begin{thebibliography}{1}

\bibitem{an04}
K.~B. Athreya and P.~E. Ney.
\newblock {\em Branching processes}.
\newblock Dover Publications Inc., Mineola, NY, 2004.
\newblock Reprint of the 1972 original [Springer, New York; MR0373040].

\bibitem{BeanKontoleonTaylor}
N.~G. Bean, N.~Kontoleon, and P.~G. Taylor.
\newblock Markovian trees: properties and algorithms.
\newblock {\em Ann. Oper. Res.}, 160:31--50, 2008.

\bibitem{HautphenneLatoucheRemiche}
S.~Hautphenne, G.~Latouche, and M.-A. Remiche.
\newblock Newton's iteration for the extinction probability of a {M}arkovian
  binary tree.
\newblock {\em Linear Algebra Appl.}, 428(11-12):2791--2804, 2008.

\bibitem{HLR10}
S.~Hautphenne, G.~Latouche, and M.-A. Remiche.
\newblock Algorithmic approach to the extinction probability of branching
  processes.
\newblock {\em Methodology and Computing in Applied Probability}, 2010.

\bibitem{HVH10}
S.~Hautphenne and B.~Van~Houdt.
\newblock On the link between {M}arkovian trees and tree-structured {M}arkov
  chains.
\newblock {\em European Journal of Operational Research}, 201(3):791 -- 798,
  2010.

\bibitem{meyerstewart}
C.~D. Meyer and G.~W. Stewart.
\newblock Derivatives and perturbations of eigenvectors.
\newblock {\em SIAM J. Numer. Anal.}, 25(3):679--691, 1988.

\bibitem{varga}
R.~S. Varga.
\newblock {\em Matrix iterative analysis}, volume~27 of {\em Springer Series in
  Computational Mathematics}.
\newblock Springer-Verlag, Berlin, expanded edition, 2000.

\bibitem{wilkinsonbook}
J.~H. Wilkinson.
\newblock {\em The algebraic eigenvalue problem}.
\newblock Monographs on Numerical Analysis. The Clarendon Press Oxford
  University Press, New York, 1988.
\newblock Oxford Science Publications.

\end{thebibliography}
\end{document}